\documentclass[11pt, oneside]{amsart}   	
\usepackage[margin=1.4in,marginparwidth=1in]{geometry}

\usepackage{graphicx}				
\usepackage{amssymb}
\usepackage{xcolor}
\usepackage{pgfplots,tikz}
\usetikzlibrary{patterns, shapes.geometric, intersections, calc}
\definecolor{imayou}{RGB}{154, 154, 235}
\definecolor{usuai}{RGB}{9, 150, 126}
\definecolor{persred}{RGB}{154,63,63}
\definecolor{sand}{RGB}{201, 177, 60}
\pgfplotsset{compat=1.18}
\usepackage{hyperref}
\usepackage{amsmath}
\usepackage{amsthm}
\usepackage{mathrsfs}
\usepackage{mathtools}
\usepackage[title]{appendix}
\mathtoolsset{showonlyrefs=true}
\usepackage{bbm}
\usepackage{enumerate, comment} 
\usepackage[foot]{amsaddr}

\title[Hypoellipticity on time-periodic space-times]{Hypoellipticity on time-periodic space-times}

\theoremstyle{definition}

\theoremstyle{plain}
\newtheorem{theorem}{Theorem}[section]
\newtheorem{lemma}[theorem]{Lemma}

\theoremstyle{definition}

\newtheorem{remark}[theorem]{Remark}

\numberwithin{equation}{section}

\newcommand{\Rb}{\mathbb{R}}
\newcommand{\Zb}{\mathbb{Z}}
\newcommand{\RR}{\mathbb{R}}
\newcommand{\ZZ}{\mathbb{Z}}
\newcommand{\NN}{\mathbb{N}}

\newcommand{\Rd}{\mathbb{R}^d}
\newcommand{\N}{\mathbb{N}}
\newcommand{\C}{\mathbb{C}}

\newcommand{\Dc}{\mathcal{D}}

\newcommand{\M}{\mathcal{M}}
\newcommand{\Nb}{\mathbb{N}}
\newcommand{\Tb}{\mathbb{T}}

\newcommand{\ra}{\rightarrow}

\renewcommand{\>}{\right\rangle}

\newcommand{\limk}{\lim_{k \ra \infty}}

\newcommand{\nm}[1]{\left\| #1 \right\|}

\newcommand{\lp}[2]{ \nm{#1}_{L^{#2}}}

\newcommand{\hp}[2]{\nm{#1}_{H^{#2}}}

\newcommand{\ltwo}[1]{\lp{#1}{2}}

\newcommand{\p}{\partial}

\newcommand{\T}{\mathbb{T}}
\newcommand{\Ac}{\mathcal{A}}

\newcommand{\ti}{\widetilde}

\newcommand{\Op}{\text{Op}}

\newcommand{\Sb}{\mathbb{S}}

\renewcommand{\Im}{\text{Im }}

\newcommand{\abs}[1]{|#1|}

\newcommand{\Qc}{\mathcal{Q}}

\newcommand{\ep}{\epsilon}

\newcommand{\ang}[1]{{\langle{#1}\rangle}}
\newcommand{\noqed}{\renewcommand{\qedsymbol}{}}

\newcommand{\cls}{\mathrm{cl}}

\begin{document}

\author{Sandro Coriasco}
\address{Dipartimento di Matematica ``G. Peano'', Universit\`a di Torino, V. C. Alberto, n. 10, I-10126, Torino, Italy}
\email{sandro.coriasco@unito.it}

\author{Perry Kleinhenz}
\address{Department of Mathematics, Illinois State University, 300 S. School St., Normal IL 61761, USA}
\email{pbklein@ilstu.edu}

\author{Jared Wunsch}
\address{Department of Mathematics, Northwestern University, 2033 Sheridan Rd., Evanston IL 60208, USA}\email{jwunsch@math.northwestern.edu}

\begin{abstract}
    We study the hypoellipticity of operators on a product type Lorentzian manifold where the time variable is periodic.
    In particular, we prove that hypoellipticity holds for such time-periodic equations, with a stronger estimate when the mass parameter or time period lies outside a set of arbitrarily small measure. We consider both compact and noncompact spatial manifolds and provide explicit examples involving the wave operator. The proof relies on a Fourier series decomposition and asymptotics for eigenvalue counting functions.
\end{abstract}

\maketitle

\section{Introduction}\label{sec:intro}
For $M$ a compact Riemannian manifold with metric $g$, let $P$ be a nonnegative operator on $M$. Assume that $P$ satisfies an estimate of subelliptic type, $\ltwo{(1+P)^k u} \approx \hp{u}{\eta k}$ for some $\eta>0$; consequently, by the spectral theorem, the eigenfunctions of $P$ form a complete orthonormal basis of $L^2(M)$, and have eigenvalues tending to $+\infty$. Let  $N_P(z) := \# \{\lambda \in \sigma(P): \lambda \leq z\}$ denote the associated eigenvalue counting function.
For $s \in \N, m\in \Rb, \alpha \in \Rb$, we consider the following inhomogeneous equation:
\begin{equation}\label{inhomog}
    (L- m)u=(D_t^s + \alpha P - m)u = f
\end{equation}
on $\mathcal{M}=\Sb^1 \times M$, a product type Lorentzian manifold, where $t\in\Sb^1=\RR/2 \pi \ZZ$ and $D_t=-i\p_t$. As the name of the $t$ variable suggests, the reader is encouraged to think of \eqref{inhomog} as an evolution equation (e.g., heat or wave equation); the important feature here is the \emph{time-periodicity}. 
For $\M$ a compact manifold, we say $L-m$ is hypoelliptic if for every $u \in \Dc'(\M)$, $(L-m) u \in C^{\infty}(\M)$ implies $u \in C^{\infty}(\M)$. (In the noncompact case, we employ tempered distributions instead of $\Dc'(\M)$.) In this paper, we explore the genericity of hypoellipticity of such time-periodic 
operators in two different senses.  

First, we show that hypoellipticity is typical when we fix $\alpha=-1$ and vary the 
mass parameter $m$. 
\begin{theorem}\label{thm1}
Assume that for some $\gamma,\rho>0$, $N_P(z) = C z^{\gamma} + O (z^{\gamma-\rho})$.
Fix $\alpha=-1$ and $\beta>\gamma s+\max(0,1-\rho s)$, then 
\begin{enumerate}
    \item 
     For almost every $m \in[-1,1]$ there exists $C_m>0$ such that 
\begin{equation}
    \nm{u}_{L^2_{t,x}(\M)} \leq C_m \nm{(L -m) u}_{H^{\beta}_t L^2_x(\M)}.
\end{equation}

    \item For all $\ep>0$, there exists a set $\mathcal{A}_{\ep} \subset [-1,1]$ with $\mu(\mathcal{A}_\ep) <\ep$ and $C_{\ep}>0$ such that for all $m \in [-1,1] \backslash \mathcal{A}_{\ep}$, we have the uniform upper bound
    \begin{equation}
        \nm{u}_{L^2(\M)} \leq C_{\ep}\nm{(L-m) u}_{H^{\beta}_t L^2_x(\M)} .
    \end{equation}
\end{enumerate}
\end{theorem}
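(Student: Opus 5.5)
Expand $u$ and $f=(L-m)u$ in the joint basis $e^{ikt}\phi_j(x)$, where $P\phi_j=\lambda_j\phi_j$. With $\alpha=-1$ the operator acts diagonally:
\[
(L-m)\bigl(e^{ikt}\phi_j\bigr)=\bigl(k^s-\lambda_j-m\bigr)e^{ikt}\phi_j .
\]
Hence $\hat u_{kj}=\hat f_{kj}/(k^s-\lambda_j-m)$, and $\|f\|_{H^\beta_tL^2_x}^2\simeq\sum_{k,j}\langle k\rangle^{2\beta}|\hat f_{kj}|^2$. Both statements therefore reduce to a lower bound for the symbol. It suffices to find, for the relevant $m$, a constant $c_m>0$ such that
\begin{equation}\label{eq:goal}
|k^s-\lambda_j-m|\ \ge\ c_m\langle k\rangle^{-\beta}\qquad\text{for all }k\in\ZZ,\ j .
\end{equation}
This gives $|\hat u_{kj}|\le c_m^{-1}\langle k\rangle^\beta|\hat f_{kj}|$, and summing yields the estimate with $C_m=c_m^{-1}$. (If \eqref{eq:goal} holds, $L-m$ is injective, so the estimate is meaningful for all $u$.)

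Since $m\in[-1,1]$ and $\lambda_j\ge0$, the symbol satisfies $|k^s-\lambda_j-m|\ge1$ unless $|k^s-\lambda_j|\le 2$, up to finitely many small exceptions which I would handle separately. The dangerous pairs are thus $(k,j)$ with $\lambda_j\in[k^s-2,k^s+2]$; in particular only $k$ with $k^s\ge -2$ matter, and $|k|\to\infty$ along them. For fixed $k$, the number of such $j$ is
\[
N_P(|k|^s+2)-N_P(|k|^s-2).
\]
By the Weyl-type hypothesis this is $O(|k|^{s(\gamma-1)})+O(|k|^{s(\gamma-\rho)})$, since the main term difference is $C\bigl((z+2)^\gamma-(z-2)^\gamma\bigr)=O(z^{\gamma-1})$. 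Hence it is $O(|k|^{s\gamma-\min(s,\rho s)})$. Because the bound $O(\cdot)$ is only valid for large $z$, a careful statement allows an extra $O(1)$, which is at most $O(|k|^0)$. So the count is
\[
O\bigl(|k|^{\,s\gamma-\min(1,\rho s)}\bigr)\ \lesssim\ |k|^{\,s\gamma+\max(0,1-\rho s)-1}.
\]
The exponent here is matched to the hypothesis on $\beta$; this is the step where the precise threshold appears, and getting it exactly right is the main obstacle. In particular, one must choose between bounding the main-term difference by $z^{\gamma-1}$ and by $1$, and incorporate the error term correctly.

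Next comes a Borel–Cantelli/measure argument. For $\delta>0$, define the bad set
\[
B_\delta=\bigl\{m\in[-1,1]:\ \exists\,k,j \text{ with } |k^s-\lambda_j-m|<\delta\langle k\rangle^{-\beta}\bigr\}.
\]
Each pair $(k,j)$ contributes an interval of length $2\delta\langle k\rangle^{-\beta}$, and only the dangerous pairs counted above contribute. Hence
\[
\mu(B_\delta)\le 2\delta\sum_k \langle k\rangle^{-\beta}\,\#\{j:\ |\lambda_j-k^s|\le2\}\ \lesssim\ \delta\sum_k\langle k\rangle^{-\beta+s\gamma+\max(0,1-\rho s)-1}.
\]
The series converges precisely because $\beta>\gamma s+\max(0,1-\rho s)$. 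Thus $\mu(B_\delta)\le C'\delta$.

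For part (2), given $\ep$, choose $\delta$ with $C'\delta<\ep$ and set $\mathcal A_\ep=B_\delta$. Off $\mathcal A_\ep$, \eqref{eq:goal} holds with the uniform constant $c=\delta$, so $C_\ep=\delta^{-1}$. Part (1) follows because $\bigcap_{\delta>0}B_\delta$ has measure zero: every $m$ outside it lies outside some $B_\delta$, giving \eqref{eq:goal} with $c_m=\delta$. This also covers the finitely many non-dangerous low-$k$ exceptions, which are absorbed since the bad set is defined over all pairs.
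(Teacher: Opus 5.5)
Your argument is correct, and it handles the measure-theoretic part differently from the paper.

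\textbf{The paper's route.} The paper fixes the constant $1$ and applies Borel--Cantelli to the sets $A_j$ of masses $m$ for which $j^s-m$ lies within $\langle j\rangle^{-\beta}$ of $\sigma(P)$. This only controls large time frequencies. So for part (1) the paper must also discard the countable resonance set $\mathcal{R}=\{j^s-\lambda_k\}$ and take a minimum over finitely many low modes. For part (2) it assembles $\mathcal{A}_\epsilon$ from a tail union $\bigcup_{j\ge n}A_j$ plus small balls around finitely many low-mode resonances.

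\textbf{Your route.} You use a single scaled bad set $B_\delta$ over \emph{all} pairs, with $\mu(B_\delta)\le C'\delta$ from one union bound. This absorbs exact resonances and low modes automatically. Part (2) becomes immediate: take $\mathcal{A}_\epsilon=B_\delta$ and $C_\epsilon=\delta^{-1}$. Part (1) follows from $\mu(\bigcap_{\delta>0}B_\delta)=0$. The paper's version buys only the slightly more explicit statement that, for a.e.\ $m$, the bound holds with constant $1$ for all large $|j|$.

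\textbf{Unnecessary hedges.} There are no ``small exceptions'', since $|k^s-\lambda_j|>2$ already gives $|k^s-\lambda_j-m|>1$. You do need $\delta\le1$ so that such pairs drop out of $B_\delta$. Likewise, passing from $\min(s,\rho s)$ to $\min(1,\rho s)$ is a plain weakening, not a delicate choice.

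\textbf{A sharper threshold.} Your eigenvalue window $[k^s-2,k^s+2]$ has length $4$. The paper instead counts eigenvalues in $((|j|-2)^s,(|j|+2)^s]$, which has length $\sim|j|^{s-1}$. As a result, your argument actually proves the theorem under $\beta>s\gamma+1-\min(s,\rho s)$. This improves the stated threshold whenever $s\ge2$ and $\rho s>1$.
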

The above estimates (and those following) should be interpreted as meaning that finiteness of the right-hand side yields finiteness of the left-hand side with the corresponding bound, i.e. here, $(L-m) u \in H^\beta L^2$ implies $u \in L^2$.  Since $L$ commutes with powers of $D_t^2+P$, it is easy to see that the estimate implies global hypoellipticity.

Our second result is that hypoellipticity holds generically for $L$ as we vary $\alpha$. Note that varying $\lvert\alpha\rvert$ is equivalent to varying the time-period, i.e.\ the length of $\Sb^1_t$. Note also for $s$ even and $\alpha>0$, the following result is trivial, as the operator is elliptic.

\begin{theorem}\label{thm2}
    
Assume that for some $\gamma>0$, $N_P(z)=O(z^{\gamma})$. Fix $\beta>\max(s(\gamma-1)+1,0)$. Then 
    \begin{enumerate}
        \item For almost every $\alpha \in \Rb \backslash (-1,1)$, there exists $C_{\alpha}>0$ such that 
        \begin{equation}
            \nm{u-\Pi_0 u}_{L^2_{t,x}(\M)} \leq C_\alpha \nm{L u}_{H^{\beta}_t L^2_x(\M)},
        \end{equation}
        where $\Pi_{0}$ is the projection onto $\ker P \cap \ker D_t$.
        \item For all $\ep>0$, there exists a set $\mathcal{B}_{\ep}$ with $\mu(\mathcal{B}_{\ep})<\ep$ and $C_{\ep}>0$ such that for all $\alpha \in ([-2,-1] \cup [1,2]) \backslash \mathcal{B}_{\ep}$ we have the uniform upper bound
        \begin{equation}
            \nm{u- \Pi_{0} u}_{L^2(\M)} \leq C_{\ep} \nm{L u}_{H^{\beta}_t L^2_x(\M)},
        \end{equation}
        where $\Pi_{0}$ is the projection onto $\ker P \cap \ker D_t$.
    \end{enumerate} 
\end{theorem}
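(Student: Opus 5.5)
Decompose $u$ in Fourier series in $t$ and in the eigenbasis $\{\phi_j\}$ of $P$ (eigenvalues $\lambda_j\to\infty$): $u=\sum_{k\in\ZZ}\sum_j u_{kj}e^{ikt}\phi_j$. Then $Lu$ has coefficients $(k^s+\alpha\lambda_j)u_{kj}$, and
\begin{equation}
\nm{Lu}_{H^\beta_tL^2_x}^2\approx\sum_{k,j}\ang{k}^{2\beta}|k^s+\alpha\lambda_j|^2|u_{kj}|^2 .
\end{equation}
Off the kernel of $L$ itself, it therefore suffices to show that, for the admissible $\alpha$,
\begin{equation}
|k^s+\alpha\lambda_j|\ge c_\alpha\ang{k}^{-\beta}\qquad\text{for all }(k,j)\text{ with }(k,\lambda_j)\ne(0,0).
\end{equation}
The pairs with $k=0,\lambda_j=0$ give exactly $\Pi_0$. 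The pairs $k=0$, $\lambda_j>0$ give $|\alpha|\lambda_j\ge|\alpha|\lambda_1>0$ with $\lambda_1$ the first positive eigenvalue. The pairs $k\ne0$, $\lambda_j=0$ give $|k|^s\ge1$. So only $k\neq0$, $\lambda_j>0$ remain. If $k^s$ and $\alpha$ have the same sign there is no small denominator, so the case to handle is $\alpha\lambda_j\approx -k^s$. Then $\lambda_j\approx|k|^s/|\alpha|\lesssim|k|^s$ for $|\alpha|\ge1$.

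Next comes a Borel--Cantelli argument. By scaling $\alpha\mapsto\alpha/2^n$ it is enough to treat $|\alpha|\in[1,2]$: the set $\Rb\setminus(-1,1)$ is the countable union of the dyadic pieces $\pm[2^n,2^{n+1}]$, and a null set on each piece gives a null set overall. On each dyadic piece $\lambda_j\sim|k|^s$ with constants depending on $n$, and the argument is the same. For $\delta>0$ define the bad set
\begin{equation}
B_{kj}(\delta)=\{\alpha\in[1,2]\cup[-2,-1]:|k^s+\alpha\lambda_j|<\delta\ang{k}^{-\beta}\}.
\end{equation}
This is an interval in $\alpha$ of length at most $2\delta\ang{k}^{-\beta}/\lambda_j$. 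It is nonempty only when $\lambda_j\in[|k|^s/2-1,\,|k|^s+1]$. The number of such $j$ is at most $N_P(|k|^s+1)=O(|k|^{s\gamma})$, and each has $\lambda_j\gtrsim|k|^s$. Hence
\begin{equation}
\mu\Big(\bigcup_{k\neq0,\,j}B_{kj}(\delta)\Big)\lesssim\delta\sum_{k\ne0}|k|^{s\gamma}\,|k|^{-\beta}\,|k|^{-s}=\delta\sum_{k\neq0}|k|^{s(\gamma-1)-\beta}.
\end{equation}
This series converges exactly when $\beta>s(\gamma-1)+1$. The hypothesis also demands $\beta>0$; that condition keeps the Sobolev weight meaningful and makes the bound uniform in the cases already handled.

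Choose $\delta=\ep/C$ and let $\mathcal B_\ep$ be this union. Then $\mu(\mathcal B_\ep)<\ep$, and for $\alpha\notin\mathcal B_\ep$ we get $|k^s+\alpha\lambda_j|\ge\delta\ang{k}^{-\beta}$ on the resonant region. Combined with the trivial bounds above, this gives a uniform constant $C_\ep$ and proves part (2). Intersecting $\mathcal B_{1/n}$ over $n$ gives a null set; every $\alpha$ outside it lies outside some $\mathcal B_{1/n}$, which gives part (1) on $[1,2]\cup[-2,-1]$, and the dyadic rescaling extends it to all of $\Rb\setminus(-1,1)$.

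The main obstacle is the counting step. We must make sure that the number of eigenvalues in the resonant window, weighted by $1/\lambda_j$, is controlled by the crude bound $N_P=O(z^\gamma)$ without any remainder term. Bounding the window count by $N_P(|k|^s+1)$ rather than a shell count is wasteful but suffices, and this explains the exponent $s(\gamma-1)+1$. Carrying out the dyadic rescaling to $|\alpha|$ large is a bookkeeping issue only.
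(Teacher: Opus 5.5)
Your argument is correct. Its analytic core is the same as the paper's: the per-frequency bad set has measure $O(|k|^{s(\gamma-1)-\beta})$, coming from the crude count $N_P(|k|^s+1)=O(|k|^{s\gamma})$ of eigenvalues in the resonant window times intervals of length $\approx\ang{k}^{-\beta}|k|^{-s}$.

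Where you differ is in how the low modes and the two conclusions are organized. The paper fixes the threshold $|j|^{-\beta}$ with constant $1$ and uses Borel--Cantelli to get the Diophantine bound only for $|j|\ge N_\alpha$. For part (1) it then handles the finitely many remaining modes by excluding the countable set $\mathcal{Q}=\{-j^s/\lambda_k\}$ of exact resonances. For part (2) it truncates the tail at some $J$ and separately removes balls of radius $\epsilon/(4(N+1))$ around the finitely many low-frequency resonances. Part (1) is also done directly on each $[-n-1,-n]\cup[n,n+1]$; the rescaling $\alpha P=(\alpha/n)(nP)$ appears only in the closing remark.

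You instead put a free constant $\delta$ into the threshold. This makes the bad set for all $k\neq0$ at once of measure $O(\delta)$, so part (2) follows in one step. Part (1) then follows by intersecting the $\mathcal{B}_{1/n}$, with no Borel--Cantelli and no exceptional set $\mathcal{Q}$.

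Your route also gives a better constant, $C_\epsilon=O(\epsilon^{-1})$. The paper's $C_\epsilon$ carries the number $N$ of low-frequency resonance points, which grows as $\epsilon\to0$. In exchange, the paper's route shows that for a.e.\ $\alpha$ the bound holds with constant $1$ on all sufficiently high modes.

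One small detail to tidy: your window $[|k|^s/2-1,|k|^s+1]$ does not give $\lambda_j\gtrsim|k|^s$ for the first value or two of $|k|$. There you need $\lambda_j\ge\lambda_{\min}>0$, the smallest positive eigenvalue, which only affects the constant.
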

\begin{remark}
    For an explicit example satisfying the hypotheses of these theorems, take $s=2$ and $P=-\Delta_g$, where $\Delta_g$ is the negative-definite Laplace-Beltrami operator for the metric $g$. Then when $\alpha=-1$, $-L=\Box=\p_t^2-\Delta_g$ is the \emph{wave operator}. 
    Note that $-\Delta_g$ satisfies a Weyl law, in particular if $M$ has dimension $d$ then
    \begin{equation}
        N_P(\lambda) = \lambda^{\frac{d}{2}} (2\pi)^{-d} \omega_d \operatorname{vol}(M) + O(\lambda^{\frac{d-1}{2}}),
    \end{equation}
    where $\omega_d$ is the volume of the unit ball in $d$ dimensions.     
    So the assumptions on $N_P$ are satisfied with $\gamma=\frac{d}{2}$, $\rho=\frac{1}{2}$. Thus for Theorem \ref{thm1} we must take $\beta>d$ and for Theorem \ref{thm2} we must take $\beta>d-1$.

    For examples of operators $P$ defined on certain noncompact manifolds, to which (a small variant of) Theorem \ref{thm1} and Theorem \ref{thm2} extend,
    see Section \ref{sec:aEmf} below (see also \cite[Sections 2, 4, and Appendix]{AvilaBoninoCoriasco2026} and
\cite[Sections 2 and 3]{AvilaBoninoCoriasco2025b}).
\end{remark}


%
To further discuss the context and proof of our result, let $\phi_k$ denote the orthonormal basis of eigenfunctions of $P$. We may then write 
\begin{align}
    u(x,t) = \sum_{j=-\infty}^\infty \sum_{k=0}^{\infty} a_{j,k} e^{ijt} \phi_k(x), \\
    f(x,t) = \sum_{j=-\infty}^\infty \sum_{k=0}^{\infty} b_{j,k} e^{ijt} \phi_k(x).
\end{align}
Then, \eqref{inhomog} implies
\begin{equation}\label{ab}
    a_{j,k} (j^s+\alpha\lambda_k-m)= b_{j,k}.
\end{equation}
Equalities of this form can be obtained in a variety of related contexts, and these naturally give rise to non-Liouville conditions, 
also called Diophantine conditions, such as (with $s=1$)
\begin{equation}\label{eq:Diophantine}
    |j+\alpha \lambda_k| \geq C k^{-B}
    \text{ or }
    |j+\alpha \lambda_k| \geq C_\varepsilon \exp(-\varepsilon k^B)
    .
\end{equation}
In many such contexts these Diophantine conditions are equivalent to hypoellipticity (see, e.g.,  \cite{Bergamasco1994,Bergamasco1999,BergamascoCordaroMalagutti1993,BergamascoDattoriGonzalez2018,AvilaBoninoCoriasco2026, AvilaCappiello2022,Hounie1979} and the references quoted therein). However, it is unclear from these previous results if an $\alpha$ can actually be chosen to ensure that \eqref{eq:Diophantine} holds. Our second result addresses this: when $P$ satisfies the appropriate spectral hypotheses, such an $\alpha$ is typical.


\smallskip



\subsection{Literature Review}\label{subs:litrev}
The two basic questions for an evolution operator of the simplest form, namely $L=D_t+\alpha P$, as well as for vector fields and/or systems, are:
\begin{itemize}
\item \emph{global hypoellipticity}: for a distribution $u$ in the chosen linear functional class $H'$, dual to the \textit{regular, test-functions space} $H$, if $Lu \in H$ do we have $u \in H$ as well?
\item \emph{global solvability}: for which (compatible) right-hand sides $f$ does there exist $u \in H'$ solving $Lu=f$ (possibly, with a controlled loss of regularity)?
\end{itemize}
The answers depend on various inputs: the growth of the eigenvalue sequence $(\lambda_k)_k$, the topology of the test-functions $H$ and corresponding distributions $H'$, the sign and real/imaginary parts of the coefficient $\alpha$, and the lower bound imposed in \eqref{eq:Diophantine}.  
The prototypical non-Liouville bounds are:
\begin{itemize}
\item polynomial, that is of type $|j+\alpha\lambda_k|\ge Ck^{-B}$, $H=$ smooth or Sobolev spaces (see, e.g., \cite{Bergamasco1994,BergamascoDattoriGonzalez2017,AvilaBoninoCoriasco2025b,AvilaBoninoCoriasco2026,Hounie1979});
\item subexponential, that is of type $|j+\alpha\lambda_k|\ge C_\varepsilon \exp(-\varepsilon k^B)$,
$\varepsilon > 0$, $H=$ Gevrey and Gelfand--Shilov spaces (see, e.g., \cite{Bergamasco1999,BergamascoCavalcantiGonzalez2021,BergamascoDattoriGonzalez2018,AvilaCappiello2022,AvilaCappiello2025Solvability});
\item Komatsu/Denjoy--Carleman, that is of type
$|j+\alpha\lambda_k|\ge C \exp(-M( k))$, $H=$ analytic or Denjoy--Carleman ultradifferentiable classes, where the function $M$ is determined by the weight sequence of the class (see, e.g.,  \cite{VictorArias2021DenjoyCarleman}).
\end{itemize}

Results of this type 
go back to Seeley in the 1960s \cite{Seeley1965,Seeley1969} and 
Greenfield and Wallach  in the 1970s 
\cite{GreenfieldWallach1972,GreenfieldWallach1973VectorFields,GreenfieldWallach1973Remarks}. A notable example is the constant coefficient operator/vector field
\[
    L_\alpha=D_t+\alpha D_x, \text{ or }
    X_\alpha=\partial_t+\alpha \partial_x,
    \quad (t,x)\in\mathbb{T}^2.
\]
For $(k,\ell)\in\mathbb{Z}^2$, setting $\varphi_{k\ell}(t,x)=e^{i(kt+\ell x)}$, then $L\varphi_{k\ell}=(k+\alpha \ell)\varphi_{k\ell}$. An irrational $\alpha\in\mathbb{R}$ is non-Liouville
if there exist $C,N>0$ such that 
\[
    |k+\alpha\ell|\ge C|\ell|^{-N}, \quad (k,\ell)\in\mathbb{Z}^2, \ell\not=0,
\]
or, equivalently, $\mathrm{dist}(\alpha\ell,\mathbb{Z})\ge C\ell^{-N}$. 
Notice that a rational $\alpha$ produces infinitely many resonances.
Greenfield and Wallach \cite{GreenfieldWallach1972} proved in this setting that $C^{\infty}/\Dc'$ hypoellipticity 
of $L_\alpha$ holds if and only if 
$\alpha$ is a non-Liouville irrational number. Since Liouville numbers are measure zero in the real numbers, hypoellipticity is typical in this context. This can be considered the basic model of the now often observed 
Diophantine condition for global hypoellipticity. However, in other settings the eigenvalues $\lambda_k$ are not explicitly known, so it is not obvious when such a condition is satisfied. 

Cardoso and Hounie \cite{CardosoHounie1977} developed an abstract setting for studying hypoellipticity, and Hounie \cite{Hounie1979} studied operators of the form 
%
\[
    L = D_t+A(t), \quad t\in\Sb^1,
\]
on a scale of Hilbert spaces generated by a suitable operator $A$. The spectral properties of $A$ do not play a role locally in $t$, but become
crucial in the global periodic setting.
In \cite[Theorem 2.2]{Hounie1979}, taking $A=-i\partial_x$, the
model operator 
\[
    L=\partial_t - b(t)\partial_x, \quad (t,x)\in\mathbb{T}^2,
\]
is considered. When $\Im b \equiv 0$, hypoellipticity is equivalent to a non-Liouville bound on $\bar{b}=(2 \pi)^{-1} \int_0^{2\pi} b(t) dt.$ When $\Im b \not \equiv 0$ hypoellipticity is equivalent to $\Im b$ not changing sign. In the literature similar conditions are referred to as condition $(\mathcal{P})$.
%
%


Through the 1980s--1990s and early 2000s, many substantial contributions were made to the study of
the global theory of vector fields and first-order systems on tori and compact manifolds, by
Bergamasco, Cordaro, Malagutti, Petronilho, and others; see, e.g., \cite{Bergamasco1994,Bergamasco1999,BergamascoCordaroMalagutti1993,BergamascoCordaroPetronilho2004,Petronilho2005} and the references quoted therein. They
studied perturbations, analytic regularity, solvability, and condition $(\mathcal{P})$-type hypotheses. 
A model perturbed operator is of the form
\[
    L=\partial_t+\alpha\partial_x+q(t,x) =X_\alpha+q(t,x),\quad (t,x)\in\mathbb{T}^2,
\]
with $q$ an order $0$ perturbation.  
These authors showed, for broad classes of perturbations, that global hypoellipticity is true under analogous Diophantine conditions or condition $(\mathcal{P})$, while resonant or sign-changing configurations allow the construction of counterexamples or obstructions. 
Dickinson, Gramchev, and Yoshino analyzed first-order pseudo-differential operators on tori using normal forms and a Diophantine condition \cite{DickinsonGramchevYoshino1996}. 




The subsequent literature on hypoellipticity of time-periodic operators is large; we now give a sampling, referring to the reference lists of the quoted papers for further details.
 Bergamasco, Dattori da Silva, Gonzalez, Kirilov, Cavalcanti and others refined the torus theory for complex vector fields, periodic solutions, Gevrey regularity, and solvability \cite{BergamascoCavalcantiGonzalez2021,BergamascoDattoriGonzalez2017,BergamascoDattoriGonzalez2018,BergamascoDattoriGonzalezKirilov2015}. 
Avila, Gramchev and Kirilov \cite{AvilaGramchevKirilov2018} studied operators on $\mathbb{T}\times M$ by separation of variables with respect to a positive elliptic operator on the closed manifold $M$. 
 Kirilov, Moraes, and Ruzhansky \cite{KirilovMoraesRuzhansky2021JFA,KirilovMoraesRuzhansky2021Komatsu}
 studied and characterized global hypoellipticity and global solvability for vector fields on a compact Lie group $G$, also within the framework of the Komatsu classes. 
 Kirilov, Kowacs, and Moraes \cite{KirilovKowacsMoraes2024ToriSpheres} studied operators on product manifolds of the form  $\T^{r+1}\times(\mathbb S^3)^s$. These are evolution-like first-order operators with a zero-order perturbation, involving left-invariant vector fields on the $\Sb^3 \simeq SU(2)$ factors. The necessary and sufficient conditions include Diophantine inequalities, change of sign of coefficient functions, and  properties of level sets.  
 
 Finally, parallel to the above work, the previous development of the spectral analysis of operators defined on noncompact domains (see, e.g., Helffer, Robert, Shubin \cite{Helffer1984,HelfferRobert1981,Shubin2001}; see also Nicola and Rodino \cite{NicolaRodino2010}) made it possible to consider time-periodic operators on noncompact spaces.
 In these contexts, we can recall the work by Avila and Cappiello, involving  elliptic operators $P$ of Shubin type \cite{AvilaCappiello2022,AvilaCappiello2025Solvability}, by Kowacs, on the Schwartz spaces \cite{Kowacs2024Schwartz}, and by 
Kowacs and Tokoro, on time-periodic Gelfand-Shilov spaces via non-discrete Fourier analysis \cite{KowacsTokoro2026NonDiscrete}.
 Avila, Bonino and Coriasco \cite{AvilaBoninoCoriasco2025b,AvilaBoninoCoriasco2026} studied time-periodic evolution operators of the form
 \[
    L=D_t+\omega P, \quad \omega \in\C, P\in\Psi^{r,\rho}_\cls,
 \]
where $\Psi^{r,\rho}_\cls$ denotes the operators (locally) defined by (classical) $SG$-symbols of order $r,\rho\in\mathbb{R}$ (see Section \ref{sec:aEmf} below), 
 within the framework of mixed Gevrey-Sobolev-Kato spaces, characterizing their global hypoellipticity and solvability on $\T\times\Rd$ and on $\T\times X$, with $X$ an asymptotically Euclidean manifold. 

Much of the work described above shows that hypoellipticity holds subject to a spectral hypothesis. Beyond the case of $\Tb^n$, as studied by Greenfield and Wallach \cite{GreenfieldWallach1972}, the authors are unaware of previous work verifying that such a spectral hypothesis holds generically; that is the goal of this paper.

\smallskip

The paper is organized as follows. In  Section \ref{sec:circle}
we first look at the special case when
$M=\Sb^1$. In the subsequent Section \ref{sec:generalM} we give the proofs of our main results for the case of a general compact Riemannian manifold $M$.
In the concluding Section \ref{sec:aEmf} we focus on the case of $P$ being a suitable element of a class of operators defined on certain noncompact Riemannian manifolds.
Namely, there $M$ is (the interior of) an asymptotically Euclidean manifold, and we show that (a small variant of) Theorem \ref{thm1} and Theorem \ref{thm2} extend to such a geometric setting.

\textbf{Acknowledgments:} The authors would like to thank Adalberto Bergamasco, Paulo Cordaro, Pedro Tokoro, Ruoyu Wang, and Gaywalee Yamskulna for helpful conversations. The first author has been partially supported by the Italian Ministry of the University and Research - MUR, within the framework of the Call relating to the scrolling of the final rankings of the PRIN 2022 - Project Code 2022HCLAZ8, CUP D53C24003370006 (PI A. Palmieri, Local unit Sc. Resp. S. Coriasco). The first author also expresses
gratitude for the hospitality and support extended to him during his visit to the Department of Mathematics, Northwestern University, Evanston (USA), during A.Y. 2025/2026,
where part of this work was developed. The second author acknowledges partial support from NSF grant DMS-2530465 and a grant from the Illinois State University College of Arts and Sciences. The third author acknowledges partial support from NSF grants DMS-2054424 and DMS-2452331 and from Simons Foundation grant MPS-TSM-00007464.

AI tools (Claude and ChatGPT) were used in the proofreading of this manuscript, but the work and writing are the authors' alone.

\section{The circle}\label{sec:circle}
As an amusing warm-up, we consider the special case where $s=2$, $\alpha=-1$, $M=\Sb^1$, the unit circle, and $P=-\partial_x^2$.
\begin{lemma}
    If $\mathcal{M}=\Sb^1_t \times\Sb^1_x$ then for any $m \in (0,1)$
    \begin{equation}
        \ltwo{u} \leq \frac{1}{\nu}\ltwo{(\partial_t^2 -\partial_x^2+m) u},
    \end{equation}
    where $\nu=\frac{1}{2}-\abs{\frac{1}{2}-m}=\inf\{\abs{m-j},\ j \in \ZZ\}$.
\end{lemma}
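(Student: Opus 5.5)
Expand $u$ in the joint Fourier basis $e^{ijt}e^{ikx}$ on $\Sb^1_t\times\Sb^1_x$, writing $u=\sum_{j,k\in\ZZ} a_{j,k}e^{ijt}e^{ikx}$. The operator $\partial_t^2-\partial_x^2+m$ is diagonal in this basis:
\begin{equation}
(\partial_t^2-\partial_x^2+m)u=\sum_{j,k}(-j^2+k^2+m)\,a_{j,k}\,e^{ijt}e^{ikx}.
\end{equation}
By Plancherel (with the normalization $\ltwo{e^{ijt}e^{ikx}}^2=4\pi^2$ on both sides, so the constant cancels), the claimed estimate reduces to the uniform lower bound
\begin{equation}
\abs{k^2-j^2+m}\geq \nu \quad\text{for all } (j,k)\in\ZZ^2 .
\end{equation}
Once this holds, each Fourier coefficient of $u$ is bounded by $\nu^{-1}$ times the corresponding coefficient of $(\partial_t^2-\partial_x^2+m)u$. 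Summing the squares then gives the inequality, and in particular it shows that the right-hand side being finite forces $u\in L^2$.

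The key, and essentially only, step is the arithmetic observation that $k^2-j^2\in\ZZ$ for all integers $j,k$. Hence
\begin{equation}
\abs{k^2-j^2+m}=\abs{m-(j^2-k^2)}\geq \inf_{n\in\ZZ}\abs{m-n}=\nu .
\end{equation}
This is the same reasoning as in \eqref{ab} with $s=2$, $\alpha=-1$, $\lambda_k=k^2$, except that here the integrality of the spectrum of $-\partial_x^2$ makes the small-divisor problem trivial. It remains to check that for $m\in(0,1)$ the distance from $m$ to $\ZZ$ is $\min(m,1-m)=\tfrac12-\abs{\tfrac12-m}$. This is immediate, since the nearest integers to $m$ are $0$ and $1$. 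In particular $\nu>0$, so no coefficient is divided by zero.

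I do not expect any real obstacle. The only points needing care are the Plancherel normalization and the interpretation of the estimate for distributional $u$: one should read it coefficientwise, as the paper indicates after Theorem~\ref{thm1}, rather than presupposing $u\in L^2$. Both are handled directly by working at the level of Fourier coefficients.
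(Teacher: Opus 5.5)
Your proposal is correct and matches the paper's proof: the paper also diagonalizes in the basis $e^{ijt}e^{i\ell x}$ and writes $a_{j,\ell}=b_{j,\ell}/(\ell^2-j^2+m)$. It then uses $\ell^2-j^2\in\ZZ$ to get $|a_{j,\ell}|\le \nu^{-1}|b_{j,\ell}|$ for every mode, and sums via Plancherel.
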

\begin{proof}
    Since $M=\Sb^1$, we may enumerate the eigenvalues $\lambda_k$ with multiplicity as $\ell^2$, $\ell \in \ZZ$, then for $m \in (0,1)$ we have (abusing notation slightly by switching $a_{j,\bullet},\ b_{j,\bullet}$ to now have the second index in $\ZZ$) 
\begin{equation}
    a_{j,\ell} = \frac{1}{\ell^2-j^2+m} b_{j,\ell} \implies |a_{j,\ell}| \leq \frac{1}{\nu}|b_{j,\ell}|, \forall j,\ell \implies \|u\|_{L^2} \leq \frac{1}{\nu} \|f\|_{L^2}.\qed
\end{equation}
\noqed
\end{proof} 

We now turn to the massless case, where we tune the parameter $\alpha$.

Recall that Lebesgue-almost every irrational number $x$ is not a Liouville number, meaning there exist $N_0 \in \NN$ such that
\begin{equation}
    \left|x-\frac{p}{q} \right| \geq q^{-N_0}
\end{equation}
for all $p,q\in \ZZ$, $q >1$.  For non-Liouville numbers $x$, the \emph{irrationality exponent} is defined as the finite number
\[
\mu(x) = \sup\{s: 0<\abs{x-p/q}<1/q^s\text{ for infinitely many }p \in \ZZ,\ q \in \NN\}.
\]
Note that for irrational numbers $x$, the irrationality exponent satisfies $\mu(x) \geq 2$.
\begin{lemma}
    Let $\alpha \in \mathbb{R}$ be an irrational number which is not a Liouville number.  Let $\nu>\mu(\alpha)$. Then there exists $C>0$ such that 
    \begin{equation}
        \ltwo{u-\bar u} \leq C \nm{(\p_t^2-\alpha\p_x^2) u}_{H^{2(\nu-1)}_x L^2_t},
    \end{equation}
    where $\bar u$ denotes the average of $u$ over the torus.
\end{lemma}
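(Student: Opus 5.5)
Expand $u$ and $f=(\p_t^2-\alpha\p_x^2)u$ in the double Fourier series on the torus, $u=\sum_{j,\ell\in\ZZ} a_{j,\ell}e^{ijt}e^{i\ell x}$ and similarly $f$ with coefficients $b_{j,\ell}$, exactly as in the previous lemma. Then $b_{j,\ell}=(\alpha\ell^2-j^2)a_{j,\ell}$. The mode $(j,\ell)=(0,0)$ is the average $\bar u$, and it is removed. For all other modes I will show the symbol $\alpha\ell^2-j^2$ is nonzero and bound it below by $c\,\ang{\ell}^{-2(\nu-1)}$. By Plancherel this gives $\ltwo{u-\bar u}^2=\sum_{(j,\ell)\neq(0,0)}|a_{j,\ell}|^2\le C\sum \ang{\ell}^{4(\nu-1)}|b_{j,\ell}|^2=C\nm{f}^2_{H^{2(\nu-1)}_xL^2_t}$.

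Cases. If $\ell=0$, $j\neq 0$, then $|\alpha\ell^2-j^2|=j^2\ge1$. If $\ell\neq0$, $j=0$, then $|\alpha|\ell^2\ge|\alpha|>0$, since $\alpha$ is irrational and hence nonzero. If $\alpha<0$ and both indices are nonzero, then $|\alpha\ell^2-j^2|=|\alpha|\ell^2+j^2\ge1$. So the only real case is $\alpha>0$ with $j,\ell\neq0$.

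In that case factor
\[
|\alpha\ell^2-j^2|=\bigl|\sqrt\alpha\,|\ell|-|j|\bigr|\,\bigl(\sqrt\alpha\,|\ell|+|j|\bigr).
\]

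\textbf{Key step.} The irrationality measure has to be transferred from $\alpha$ to $\sqrt\alpha$, or one must work with $\alpha$ directly. The direct route is better. Write $|\alpha\ell^2-j^2|=\ell^2\,|\alpha-j^2/\ell^2|$. Reduce $j^2/\ell^2=p/q$ to lowest terms, so $q\le \ell^2$. Since $\nu>\mu(\alpha)$, only finitely many $p/q$ satisfy $|\alpha-p/q|<q^{-\nu}$. Each of those finitely many nonzero distances is positive because $\alpha$ is irrational, so there is $c>0$ with $|\alpha-p/q|\ge c\,q^{-\nu}$ for all $p/q$. Hence
\[
|\alpha\ell^2-j^2|\ge c\,\ell^2 q^{-\nu}\ge c\,\ell^{2}\ell^{-2\nu}=c\,\ell^{-2(\nu-1)},
\]
which is exactly the weight in the statement. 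The reduction to lowest terms only helps, since $q\le\ell^2$ and $\nu>0$.

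So the main obstacle is choosing to apply the Diophantine bound to the rational $j^2/\ell^2$, with denominator at most $\ell^2$, rather than passing through $\sqrt\alpha$. I also need to check that the finitely many exceptional rationals only change the constant. This holds because the exceptional set is finite and $\alpha$ is irrational. With these in hand, the Plancherel summation finishes the proof.
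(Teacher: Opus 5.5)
Your proposal is correct and follows the paper's proof. You expand in Fourier series, write $|j^2-\alpha\ell^2|=\ell^2|\alpha-j^2/\ell^2|$, apply the uniform Diophantine bound $|\alpha-p/q|\ge c\,q^{-\nu}$ (which you rightly justify from $\nu>\mu(\alpha)$ by absorbing the finitely many exceptions) to get $c\,|\ell|^{-2(\nu-1)}$, and treat $\ell=0$ trivially. The paper just takes $q=\ell^2$ directly, without reducing to lowest terms or splitting by the sign of $\alpha$, since that bound already covers $j=0$ and $\alpha<0$.
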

\begin{proof}
    Since $\nu>\mu(\alpha)$, there is $\ep>0$ such that for all $p \in \mathbb{Z}$ and $q \in \mathbb{N}$ we have
\begin{equation}
    \left|\alpha-\frac{p}{q} \right| > \ep q^{-\nu}.
\end{equation}
Thus for all $j \in \mathbb{Z}$, $k \in \Zb \backslash\{ 0\}$,  we have 
\begin{equation}
    |j^2-\alpha k^2 | = k^2 \left| \alpha - \frac{j^2}{k^2}\right| > \ep\abs{k}^{-2(\nu-1)} 
\end{equation}
hence
\begin{equation}
    a_{j,k} = \frac{1}{j^2 -\alpha k^2} b_{j,k}, \implies |a_{j,k}| \leq \ep^{-1} |k|^{2(\nu-1)} |b_{j,k}|. 
\end{equation}
On the other hand, for $k=0$, $j \neq 0$, we have the trivial estimate
\[
|a_{j,0}| \leq |b_{j,0}|.
\]
Hence
\[
|a_{j,k}| \leq C \ang{k}^{2(\nu-1)} |b_{j,k}|,\quad \text{ for } (j,k) \neq (0,0), 
\]
and this yields the desired estimate, as subtracting the average eliminates the $(0,0)$-mode.
\end{proof}

\section{General spatial manifold}\label{sec:generalM}
We now turn to the case of a general spatial manifold $M$. 

\subsection{General $M$, shifting mass $m$}\label{s:genManifoldMass}
Here we prove Theorem~\ref{thm1}.

First consider the set 
\begin{equation}
    \mathcal{R} = \{j^s -\lambda_k : j \in \ZZ, \lambda_k \in \sigma(P) \}.
\end{equation}
It is countable and so $\mu(\mathcal{R})=0$.

For $j \in \Zb$ with $|j| \geq 2$ consider the interval $((|j|-2)^s,(|j|+2)^s]$. There are 
\begin{equation}
    N_P( (|j|+2)^s) - N_P((|j|-2)^s) = C_M (|j|+2)^{s \gamma } - C_M (|j|-2)^{s\gamma} + O(|j|^{s\gamma-\rho s}) \leq C|j|^{s\gamma-\min(1,\rho s)}
\end{equation}
eigenvalues of $P$ in this interval.
Let
\begin{equation}
    \mathcal{U}_j = \{ z \in [(|j|-1)^s, (|j|+1)^s]: \exists \lambda_k \in \sigma(P), \text{ such that } |\lambda_k-z|<\<j\>^{-\beta}\},
\end{equation}
where $\<j\>=(1+|j|^2)^{1/2}$.
Since $\mathcal{U}_j$ is composed of at most $C |j|^{s\gamma-\min(1,\rho s)}$ neighborhoods of length $2\ang{j}^{-\beta}$, we have
$\mu(\mathcal{U}_j) \leq C_M \<j\>^{s\gamma-\min(1,\rho s)-\beta}$. Then define
\begin{equation}
    A_j = \{m \in [-1,1]: j^s-m \in \mathcal{U}_j\},
\end{equation}
and we have, since $\beta>s \gamma+\max(0, 1-\rho s)$
\begin{equation}
    \sum_{j=2}^{\infty} \mu(A_j) \leq \sum_{j=2}^{\infty} C_M |j|^{s\gamma-\min(1,\rho s)-\beta}<\infty.
\end{equation}

1) By the Borel--Cantelli lemma 
\begin{equation}
    \mu(\limsup_{n \rightarrow \infty} A_n) = \mu \big( \bigcap_{n=2}^{\infty} \bigcup_{j =n} A_j \big)=0.
\end{equation}
Thus $\mathcal{G}=[-1,1] \backslash (\mathcal{R} \cup \limsup_{n \ra \infty} A_n)$ has $\mu(\mathcal{G})=2$.
For $m \in \mathcal{G}$, and thus for almost every $m \in [-1,1]$, there exists $N_m \geq 2$ such that
\begin{equation}
j^s-m \notin    \mathcal{U}_j,\ \text{ for all } j \geq N_m.
\end{equation}
Therefore, for all $j \geq N_m$,
\begin{equation}
    |\lambda_k-(j^s-m)| \geq
        \<j\>^{-\beta}, \quad \lambda_k \in \sigma(P). 
\end{equation}
Since $N_m \geq 2$, separately considering $s$ even or odd, we in fact have for all $|j| \geq N_m$ 
\begin{equation}
    |\lambda_k-(j^s-m)| \geq
        \<j\>^{-\beta}, \quad\lambda_k \in \sigma(P). 
\end{equation}
Likewise for $|j|\leq N_m$, provided $\lambda_k^{1/s}>N_m+2$ we also have
\[
 |\lambda_k-(j^s-m)| \geq 1.
\]
This certainly holds for $k$ large because there are only finitely many $\lambda_k \leq (N_m+2)^s$.
Thus for $N_m'$ sufficiently large,
\begin{equation}
    |\lambda_k-(j^s-m)| \geq \ang{j}^{-\beta}, \quad \max(|j|, k)\geq N_m'.
\end{equation}
For $\max(|j|,k) < N_m'$, since $m \not \in \mathcal{R}$ we have 
\begin{equation}
    |\lambda_k-(j^s-m)| = c_{j,k}>0.
\end{equation}
Then let 
\begin{equation}
    c_{\min} = \min(1, c_{j,k}) >0,
\end{equation}
where we take the minimum  over the finite set $\{(j,k) \in \ZZ \times \NN_0:\max(|j|,k) < N_m'\}$. 
Then for all $j \in \ZZ, k \in \NN_0$ we have 
\begin{equation}
    |\lambda_k-(j^s-m)| \geq c_{\min} \ang{j}^{-\beta}.
\end{equation}
Equivalently in the notation of \eqref{ab} 
\begin{equation}
    |a_{j,k}| \leq c_{\min}^{-1} \<j\> ^{\beta} |b_{j,k}|, \quad \text{ for all } j \in \ZZ,k \in \NN_0. 
\end{equation}
Therefore we obtain
\begin{equation}
    \ltwo{u} \leq c_{\min}^{-1} \| f\|_{H^{\beta}_t L^2_x}. 
\end{equation}
as desired.


2) Note that without loss of generality we may choose $\epsilon$ to be arbitrarily small. Now choose $n \geq 2$ large enough so that
\begin{equation}
    \sum_{j=n}^{\infty} \mu(A_j) <\frac{\ep}{2},
\end{equation}
then let $\tilde{\mathcal{A}}_{\ep} = \bigcup_{j \geq n} A_j$.
There are finitely many eigenvalues in $[0,n^s+2]$, so the set 
\begin{equation}
    \ti{A} = \left\{m \in \mathbb{R}: m=l^s-\lambda_k, \text{ for some } l \in \Zb, |l|\leq n,\ \lambda_k \in \sigma(P),\ \lambda_k \leq n^s+2 \right\},
\end{equation}
is finite. Replacing each point in the set by a ball of radius $\frac{\ep}{4(N+1)}$ centered at the point, where $N$ is the number of points in $\ti{A}$, we obtain a set $\ti{A}'$ of measure $\mu(\ti{A}')< \frac{\ep}{2}$.

Consequently, if
\begin{equation}
    \mathcal{A}_{\ep}= \ti{\Ac}_{\ep} \cup \ti{A}'
\end{equation}
then $\mu(\Ac_{\ep}) <\ep$.

Now for any $m \in [-1,1] \backslash \mathcal{A}_{\ep}$ we have
\begin{align}
    |\lambda_k-(j^s-m)| \geq \begin{cases}    \<j\> ^{-\beta}, & |j| \geq n,\\
      \frac{\ep}{4(N+1)}, & |j| \leq n-1, \lambda_k \leq n^s+2, \\
      1, & |j| \leq n-1, \lambda_k \geq n^s+2.
     \end{cases}
\end{align}
Let $c_{\min}=\min( \frac{\ep}{4(N+1)},1)$, and then since $c_{\min}, \<j\>^{-\beta} \leq 1$  we have 
\begin{equation}
    |\lambda_k-(j^s-m)| \geq  c_{\min} \<j\>^{-\beta}.
\end{equation}
Therefore for all $j \in \mathbb{Z}$ and $k \in \mathbb{N}_0$ in the notation of \eqref{ab}
\begin{equation}
    |a_{j,k}| \leq \frac{1}{c_{\min}}\<j\>^{\beta} |b_{j,k}|.
\end{equation}
Since $c_{\min}$ depends only on $\ep$, and not on $m$, we have as desired
\begin{equation}
    \ltwo{u} \leq C_{\ep} \nm{f}_{H^{\beta}_t L^2_x}.\qed
\end{equation}

\subsection{General $M$, changing parameter $\alpha$}\label{s:changeAlpha}
%

We now prove Theorem~\ref{thm2}.

First consider the set 
\begin{equation}
    \mathcal{Q} = \left\{\frac{-j^s}{\lambda_k} : j \in \ZZ, \lambda_k \in \sigma(P),\lambda_k >0\right\}.
\end{equation}
It is countable and so $\mu(\mathcal{Q})=0$.

Now fix $\beta>0$ and for $j \in \Zb \backslash \{0\}, n \in \mathbb{N}$ let
   \begin{align*}
       B_{j,n} = \left\{
       \phantom{\left|z+ \frac{j^s}{\lambda_k}\right|}
       \right.\hspace*{-1.5cm}
       z \in [-n-1,-n] \cup[n,n+1]\,:\, &\exists \lambda_k \in \sigma(P) \text{ with } \lambda_k>0,     
       \\
       &\left.\text{such that } \left|z+ \frac{j^s}{\lambda_k}\right| < |j|^{-\beta}\lambda_k^{-1} \right\}.
   \end{align*}
    Note that $|z+\frac{j^s}{\lambda_k}| < |j|^{-\beta} \lambda_k^{-1}$ if and only if $|j^s+z \lambda_k| < |j|^{-\beta}$. This occurs only for values of $\lambda_k \in \left[\frac{|j|^s-1}{n+1},\frac{|j|^s+1}{n}\right]$, and there are at most $N_P((|j|^s+1)/n) = O(|j|^{s\gamma})$ such $\lambda_k$'s. Since $\lambda_k^{-1} \approx |j|^{-s}$, the $|j|^{-\beta}\lambda_k^{-1} \approx |j|^{-\beta-s}$ neighborhoods of the values $-j^s \lambda_k^{-1}$ have total measure $O(|j|^{-\beta+s(\gamma-1)})$. Therefore
    \begin{equation}
        \mu(B_{j,n}) = O(|j|^{-\beta+s(\gamma-1)}).
    \end{equation} 
     Note also, when $s$ is even then $B_{j,n} = B_{-j,n}$, and when $s$ is odd then $B_{j,n}=-B_{-j,n}$. Therefore for $\beta>s(\gamma-1)+1$,
    \begin{equation}
        \sum_{|j| \geq 1} \mu(B_{j,n}) \leq \sum_{j=1}^{\infty} C_M j^{s(\gamma-1)-\beta}<\infty.
    \end{equation}
   
    
1)  Now by the Borel--Cantelli lemma, for a fixed $n$ 
    \begin{equation}
        \mu(\limsup_{|p|\ra \infty} B_{p,n}) = \mu\left( \bigcap_{p=1}^{\infty} \bigcup_{|j| \geq p} B_{j,n} \right)=0.
    \end{equation}
    Let $\mathcal{G}_n=\big([-n-1,-n]\cup[n,n+1]\big)\backslash (\Qc \cup \limsup_{|p|\ra \infty} B_{p,n} )$, and note $\mu(\mathcal{G}_n) = 2$.
    For all $\alpha \in \mathcal{G}_n$ there exists $N_{\alpha}>0$ such that for all $|j| \geq N_{\alpha}$ and $\lambda_k \in \sigma(P)$ with $\lambda_k>0$ we have 
    \begin{equation}
         |j^s + \alpha \lambda_k| \geq \<j\>^{-\beta}.
    \end{equation}
    Since $\Rb \backslash (-1,1)= \bigcup_{n=1}^{\infty} [-n-1,-n] \cup [n,n+1]$, and the countable union of measure zero sets is measure zero, the above holds for almost every $\alpha \in \Rb \backslash (-1,1)$. 
    Furthermore, since $|j| \geq N_{\alpha}>0$ we have for all $\lambda_k \in \sigma(P)$
    \begin{equation}
         |j^s + \alpha \lambda_k| \geq \<j\>^{-\beta}.
    \end{equation}
    For $|j|<N_\alpha$ and $k$ sufficiently large, we certainly also have
    \begin{equation}
        | j^s+ \alpha \lambda_k| \geq 1, 
    \end{equation}
    since $\limk \lambda_k= \infty$.
    Therefore for some $N_\alpha' \in \NN$
    \begin{equation}
        |a_{j,k}| \leq \ang{j}^{\beta}|b_{j,k}|, \quad  \text{ when }\max(|j|,k) \geq N_\alpha'.
    \end{equation}
For $\max(|j|,k) < N_{\alpha}'$, $(j,\lambda_k) \neq (0,0)$, since $\alpha \not \in \mathcal{Q}$ we have 
\begin{equation}
    |j^s +\alpha \lambda_k| = c_{j,k}>0.
\end{equation}
Then let 
\begin{equation}
    c_{\min} = \min(1, c_{j,k}) >0,
\end{equation}
where we take the minimum  over the finite set $\{(j,k) \in \ZZ \times \NN_0:\max(|j|,k) < N_{\alpha}', (j,\lambda_k) \neq (0,0)\}$. Then we have
\begin{equation}
    |j^s+\alpha\lambda_k| \geq c_{\min} \ang{j}^{-\beta}, \quad (j,\lambda_k) \neq (0,0).
\end{equation}
Hence, the desired estimate follows, as subtracting $\Pi_0 u$ eliminates the $(j=0, \lambda_k=0)$ modes.

2) Note that without loss of generality we may choose $\epsilon$ to be arbitrarily small. Choose $J \in \NN$ so that
$$
\sum_{|j| \geq J} \mu(B_{j,1})<\frac{\ep}{2}.
$$
Set $\tilde{\mathcal{B}}_\ep=  \bigcup_{|j| \geq J}B_{j,1}$.
There are finitely many eigenvalues in $[0,2J^s]$, so the set 
\[
\mathcal{C}=\left\{z \in \Rb \backslash \left(-\frac{1}{2},\frac{1}{2}\right): z=-l^s/\lambda_k \text{ for some } l \in \ZZ, |l|\leq J,\ \lambda_k \in \sigma(P), \lambda_k >0 \right\}
\]
is finite. Replacing each point in the set by a ball of radius $\frac{\ep}{4(N+1)}$ centered at the point, where $N$ is the number of points in $\mathcal{C}$, we obtain a set $\mathcal{C}'$ of measure $\mu(\mathcal{C}') < \frac{\ep}{2}$.

So if we set
\[
\mathcal{B}_\ep = \tilde{\mathcal{B}}_\ep\cup \mathcal{C}',
\]
then $\mu(\mathcal{B}_{\ep}) <\ep$. 

Now for any $\alpha \in ([-2,-1] \cup [1,2]) \backslash \mathcal{B}_{\ep}$ we have 
\begin{equation}
    |j^s +\alpha \lambda_k| \geq \begin{cases}
        \ang{j}^{-\beta}, & |j| \geq J, 0<\lambda_k \\
        \lambda_{\min}\frac{\epsilon}{4(N+1)}, & |j| \leq J, 0<\lambda_k \leq 2J^s \\
        1, & |j| \leq J, 2J^s<\lambda_k \\
        1 & |j| \geq 1, \lambda_k=0,
    \end{cases}
\end{equation}
where $\lambda_{\min}$ is the smallest positive eigenvalue of $P$.
Set
\begin{equation}
    c_{\min}=\min\left\{1,\lambda_{\min}\frac{\epsilon}{4(N+1)}\right\} >0.
\end{equation}
Since $c_{\min}, \ang{j}^{-\beta} \leq 1$,  we have
\begin{equation}
    |j^s + \alpha \lambda_k| \geq c_{\min} \ang{j}^{-\beta}, \quad (j,\lambda_k) \neq (0,0).
\end{equation}
Since $c_{\min}$ does not depend on $\alpha$, we have as desired
\begin{equation}
    \ltwo{u-\Pi_0u} \leq C_{\ep} \nm{f}_{H^{\beta}_t L^2_x},\noqed
\end{equation}
since subtracting $\Pi_{0}u$ eliminates the $(j=0, \lambda_k=0)$ modes.\qed
\begin{remark}
    Note that if $\alpha \in [-2n,-n] \cup [n,2n]$, then we can write 
    \begin{equation}
        D_t^s + \alpha P = D_t^s + \frac{\alpha}{n}(nP).
    \end{equation}
    So then $\frac{\alpha}{n} \in [-2,-1] \cup [1,2]$ and $nP$ satisfies the same spectral assumptions as $P$. So we may find a set $\mathcal{B}_{\epsilon/n}$ of measure $\epsilon/n$ in $[-2,-1] \cup [1,2]$, which corresponds to a set $\ti{\mathcal{B}}_{\epsilon}$ of measure $\epsilon$ in $[-2n,-n] \cup [n,2n]$, so that there exists $C_{\epsilon}>0$, such that for all $\alpha \in ([-2n,-n] \cup [n,2n]) \backslash \ti{\mathcal{B}}_{\epsilon}$ we have 
    \begin{equation}
        \ltwo{u -\Pi_0 u} \leq C_{\epsilon} \nm{Lu}_{H^{\beta}_t L^2_x}.
    \end{equation}
\end{remark}

\section{Noncompact spatial manifolds: the case of asymptotically Euclidean manifolds}\label{sec:aEmf}
As we have seen in the previous Section \ref{sec:generalM}, our results primarily rely on asymptotics for the eigenvalue counting function $N_P$ of the operator $P$.
This allows us to extend Theorems \ref{thm1} and \ref{thm2} to situations where $M$ is noncompact.

For $M=\Rb^d$, employing the results in \cite{Helffer1984,HelfferRobert1981,Shubin2001} (see also the other related quotations 
in Section \ref{subs:litrev}, \cite{DollGannotWunsch2018}, and the references therein), 
an example with $s=1$ is given by $L=D_t+\alpha H -m$, involving the harmonic oscillator $H=\frac{1}{2}(-\Delta+|x|^2)\in\Op(\Gamma^2_\cls(\Rb^d))$ 
(that is, $H$ is a classical Shubin operator of order $2$, see, e.g., \cite[Ch. 2, \S 2.1]{NicolaRodino2010}). Since
\[
	N_H(z)=\frac{z^d}{d!} + O(z^{d-1}),\quad z\to+\infty,
\]
then Theorems \ref{thm1} and \ref{thm2} extend to this case with $\mathcal{M}=\Sb^1\times\Rb^d$. 
More generally, for $\kappa \in\Nb$ and any positive, elliptic, self-adjoint, Shubin operator $P\in\Op(\Gamma^\kappa_\cls(\Rb^d))$, 
the Weyl formula reads as
\[
	N_P(z)=Cz^\frac{2d}{\kappa}+O(z^{\frac{2d}{\kappa}-\frac{1}{\kappa}}), \quad z\to+\infty,
\]
and Theorems \ref{thm1} and \ref{thm2} can be applied to any $L$ involving such an operator $P$ as well.
Unfortunately, the Shubin calculus does not have good invariance properties under the action of diffeomorphisms, so this
cannot be directly extended to suitable noncompact manifolds
(see \cite{Krainer2025} for a calculus on an asymptotically conic manifold $X$ which reduces to the Shubin calculus when $X^\circ=\Rb^d$).

Another class of operators satisfying global estimates on $\Rb^d$ are the so-called SG-operators or scattering operators
(see \cite{Cordes1976,Melrose1995,NicolaRodino2010,Parenti1972,Schrohe1987}), 
whose symbols satisfy, for some $r,\rho\in\Rb$, any $\alpha,\beta\in\mathbb{Z}^d_+$ and suitable $C_{\alpha\beta}>0$,
the global estimates, 
\[
	|D^\alpha_x D^\beta_\xi a(x,\xi)|\le C_{\alpha \beta} \langle x \rangle^{r-|\alpha|}\langle \xi \rangle^{\rho-|\beta|}, \quad x,\xi\in\Rb^d.
\]
Classical subclasses are defined in terms of expansions in homogeneous components with respect to $x$ and $\xi$, respectively, and
\textit{compatibility conditions} on the corresponding terms of the two expansions (see, e.g., \cite[Ch. 3, \S 3.2]{NicolaRodino2010} for the precise definition). 
The operators associated with local classical symbols of SG/scattering type, whose space we denote
by $\Psi^{r,\rho}_\cls(X)$, are invariantly defined when $X$ is a manifold with boundary (see \cite{Melrose1995}). We naturally obtain operators in this calculus if we equip the manifold with a Riemannian metric that in a collar neighbourhood of the boundary takes the form
\[
	g=\frac{d\chi^2}{\chi^4}+\frac{h(\chi)}{\chi^2},
\]
where $\chi$ is a boundary defining function for $X$ and the smooth, symmetric $2$-tensor family $h(\chi)$ reduces to a Riemannian
metric at the boundary $\chi=0$; in particular, the Laplacian for such a metric lies in $\Psi^{0,2}_\cls(X)$.
The simplest example of such a Riemannian manifold is the (radial or stereographic) compactification of $\Rb^d$.
In this section we take $M=X^\circ$.

If $P\in\Psi^{r,\rho}_\cls(X)$ is elliptic, positive, self-adjoint, and such that $r,\rho>0$, then, setting $\gamma_0=\frac{d}{\min\{r,\rho\}}$, 
it is known that, for $z\to+\infty$,
\[
	N_P(z)=
	\begin{cases}
		C_{1r\rho}z^{\gamma_0}+O(z^\frac{d}{\max\{r,\rho\}})+O(z^{\gamma_0-\frac{1}{\max\{r,\rho\}}})
		& r\not=\rho,
		\\
		C_{2}z^{\gamma_0}\log z+C_{3}z^{\gamma_0}+O(z^{\gamma_0-\frac{1}{r}}\log z),
		& r=\rho,
	\end{cases}
\]
that is,
\begin{equation}\label{eq:weylsg}
	N_P(z)=
	\begin{cases}
		C_{1r\rho}z^{\gamma_0}+O(z^{\gamma_0-\varepsilon_{r\rho}}),
		& r\not=\rho,
		\\
		C_{2}z^{\gamma_0}\log z+C_{3}z^{\gamma_0}+O(z^{\gamma_0-\varepsilon_{r\rho}}\log z),
		& r=\rho,
	\end{cases}
\end{equation}
where
\begin{equation}\label{eq:defeps}
	0<\varepsilon=\varepsilon_{r\rho}=
	\begin{cases}
		\min\left\{d\left|\dfrac{1}{r}-\dfrac{1}{\rho}\right|,\dfrac{1}{\max\{r,\rho\}}\right\}, & r\not=\rho,
		\\
		\dfrac{1}{r}, & r=\rho,
	\end{cases}
\end{equation}
see \cite{BattistiCoriasco2011,CoriascoDoll2021,CoriascoManiccia2013}. In view of the slightly different form of the Weyl formula when $P$ is a scattering operator, 
compared with the one in the assumptions of Theorem \ref{thm1},
the only point to be re-examined here is the estimate of the number of eigenvalues of $P$ belonging to the interval $((|j|-2)^s,(|j|+2)^s]$.
By straightforward computations, \eqref{eq:weylsg} implies
\begin{align*}
N_{js}&=N_P((|j|+2)^s)-N_P((|j|-2)^s)
\\
&=
\begin{cases}
	O(|j|^{s\gamma_0-1})+O(|j|^{s\gamma_0-s\varepsilon}), & r\not=\rho,
	\\
	O(|j|^{s\gamma_0-1}\log|j|)+O(|j|^{s\gamma_0-1})+O(|j|^{s\gamma_0-s\varepsilon}\log|j|), & r=\rho,
\end{cases}
\end{align*}
that is
\[
N_{js}\le
\begin{cases}
	C_1|j|^{s\gamma_0-\varepsilon'}, 	   & r\not=\rho,
	\\
	C_2|j|^{s\gamma_0-\varepsilon'}\log|j|\le C_2|j|^{s\gamma_0-\varepsilon'+\delta}, & r=\rho,
\end{cases}
\] 
with $\varepsilon'=\min\{s\varepsilon,1\}$, $\delta>0$ arbitrarily small. Then, with the same notation employed in the proof of Theorem \ref{thm1},
\[
	\mu(\mathcal{U}_j)\le
	\begin{cases}
		C\langle j\rangle^{-1+(s\gamma_0+1-\varepsilon'-\beta)}, & r\not=\rho,
		\\
		C\langle j\rangle^{-1+(s\gamma_0+1-\varepsilon'+\delta-\beta)}, & r=\rho.		
	\end{cases}
\]
Notice that, choosing $\beta>s\gamma_0+1-\varepsilon'$, we can always find $\delta>0$ suitably small such that $s\gamma_0+1-\varepsilon'+\delta-\beta<0$.
Then, following the same argument as in Section \ref{s:genManifoldMass}, we have proved the following slight variant of Theorem \ref{thm1}.

{
\renewcommand{\thetheorem}{\ref{thm1}'} 
\begin{theorem}
Let $M$ be the interior of an asymptotically Euclidean manifold $X$, $\mathcal{M}=\Sb^1\times M$, $s\in\Nb$, 
$L$ as in \eqref{inhomog}, 
and $P\in\Psi^{r,\rho}_\cls(X)$ be elliptic, positive and self-adjoint, with order components $r,\rho>0$. Moreover,
let $\gamma_0=\frac{d}{\min\{r,\rho\}}$ and $\varepsilon$ be given by \eqref{eq:defeps}.
Fix $\alpha=-1$ and 
\begin{align*}
	&\beta>\gamma_0 s, \phantom{ + 1- s\varepsilon}\hspace*{5pt}\quad\text{if } s\varepsilon \ge1, \quad \text{or}
	\\
	&\beta>\gamma_0 s + 1- s\varepsilon,\quad \text{if } s\varepsilon<1.
\end{align*}
Then: 
\begin{enumerate}
    \item 
     For almost every $m \in[-1,1]$ there exists $C_m>0$ such that 
\begin{equation}
    \nm{u}_{L^2_{t,x}(\M)} \leq C_m \nm{(L -m) u}_{H^{\beta}_t L^2_x(\M)}.
\end{equation}

    \item For all $\ep>0$, there exists a set $\mathcal{A}_{\ep}$ with $\mu(\mathcal{A}_\ep) <\ep$ and $C_{\ep}>0$ such that for all $m \in [-1,1] \backslash \mathcal{A}_{\ep}$, we have the uniform upper bound
    \begin{equation}
        \nm{u}_{L^2(\M)} \leq C_{\ep}\nm{(L-m) u}_{H^{\beta}_t L^2_x(\M)} .
    \end{equation}
\end{enumerate}
\end{theorem}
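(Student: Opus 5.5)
The plan is to rerun the proof of Theorem \ref{thm1} from Section \ref{s:genManifoldMass} essentially verbatim. The Weyl law enters that argument only through the count of eigenvalues of $P$ in the windows $((|j|-2)^s,(|j|+2)^s]$, so we only need to redo that count using \eqref{eq:weylsg}. Since $P$ is elliptic, positive and self-adjoint in $\Psi^{r,\rho}_\cls(X)$ with $r,\rho>0$, it has compact resolvent on $L^2(M)$. Hence its eigenfunctions form an orthonormal basis and $\lambda_k\to+\infty$. The Fourier decomposition giving \eqref{ab} (with $\alpha=-1$) therefore holds unchanged on $\mathcal{M}=\Sb^1\times M$, with $u$ now a tempered distribution.

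The key step is the counting bound. From \eqref{eq:weylsg}, the mean value theorem applied to $C(|j|\pm2)^{s\gamma_0}$ gives a contribution of order $|j|^{s\gamma_0-1}$. In the case $r=\rho$, the same applies to $(|j|\pm 2)^{s\gamma_0}\log(|j|\pm2)^s$. The remainder terms contribute $O(|j|^{s\gamma_0-s\varepsilon})$, with an extra $\log|j|$ when $r=\rho$. Writing $\varepsilon'=\min\{s\varepsilon,1\}$, this gives $N_{js}\le C|j|^{s\gamma_0-\varepsilon'+\delta}$ for any $\delta>0$, and one may take $\delta=0$ when $r\ne\rho$. Consequently
\[
\mu(\mathcal{U}_j)\le C\ang{j}^{s\gamma_0-\varepsilon'+\delta-\beta}.
\]
The hypothesis on $\beta$ reads exactly $\beta>s\gamma_0+1-\varepsilon'$ in both cases $s\varepsilon\ge1$ and $s\varepsilon<1$. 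We then choose $\delta>0$ so small that $s\gamma_0-\varepsilon'+\delta-\beta<-1$. With this choice $\sum_j\mu(A_j)<\infty$, because the map $m\mapsto j^s-m$ preserves measure.

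From here both parts follow as before. For (1), Borel--Cantelli shows that almost every $m\in[-1,1]\setminus\mathcal{R}$ lies in only finitely many $A_j$. For $|j|\ge N_m$ this gives $|\lambda_k-(j^s-m)|\ge\ang{j}^{-\beta}$. For small $|j|$ and large $k$ it gives the bound $1$, since $\lambda_k\to\infty$. Finitely many remaining pairs are handled by $m\notin\mathcal{R}$, which supplies the constant $c_{\min}$. For (2), we remove the tail $\bigcup_{j\ge n}A_j$, which has measure less than $\ep/2$. We also remove small balls around the finitely many resonant values $l^s-\lambda_k$ with $|l|\le n$ and $\lambda_k\le n^s+2$. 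There are finitely many such values because the spectrum is discrete. These removals yield a constant depending only on $\ep$.

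The main obstacle is the window count in the case $r=\rho$. There the leading term $z^{\gamma_0}\log z$ does not have the pure-power form assumed in Theorem \ref{thm1}. One therefore has to check that differencing $z^{\gamma_0}\log z$ across windows of width $\sim|j|^{s-1}$ still costs a full power of $|j|$ up to a logarithm. One must also check that this logarithmic loss can be absorbed by the strict inequality on $\beta$ through the choice of $\delta$.
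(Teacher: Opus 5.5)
Your proposal is correct and takes essentially the same route as the paper. Like the paper, you treat the eigenvalue count in $((|j|-2)^s,(|j|+2)^s]$ as the only step of the proof of Theorem~\ref{thm1} that needs revisiting, and you bound it via \eqref{eq:weylsg} by $C|j|^{s\gamma_0-\varepsilon'+\delta}$ with $\varepsilon'=\min\{s\varepsilon,1\}$, where $\delta>0$ absorbs the logarithm when $r=\rho$; since the hypothesis on $\beta$ is exactly $\beta>s\gamma_0+1-\varepsilon'$, the Borel--Cantelli and finite-exclusion arguments of Section~\ref{s:genManifoldMass} then go through unchanged.
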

\addtocounter{theorem}{-1}
}
Concerning the difference in the hypotheses in Theorem \ref{thm2}, again by \eqref{eq:weylsg}, we have
\[
	N_P(z)=
	\begin{cases}
	O(z^{\gamma_0}), & r\not=\rho,
	\\
	O(z^{\gamma_0+\delta}), & r=\rho,
\end{cases}
\]
with $\delta>0$ arbitrarily small. Arguing as above, if $s(\gamma_0-1)+1<0$, we can find $\delta>0$ suitably small such that 
we also have $s(\gamma_0+\delta-1)+1<0$, so the lower bound for $\beta$ in this case is still $0$. On the other hand, if
$s(\gamma_0-1)+1\ge0$ and $\beta>s(\gamma_0-1)+1$, again there exists $\delta>0$ suitably small such that
$\beta>s(\gamma_0+\delta-1)+1$, so the lower bound for $\beta$ in this case is still $s(\gamma_0-1)+1$.
By the same argument as in Section \ref{s:changeAlpha} the following variant of Theorem \ref{thm2} holds.
{
\renewcommand{\thetheorem}{\ref{thm2}'} 
\begin{theorem}
Let $M$ be the interior of an asymptotically Euclidean manifold $X$, $\mathcal{M}=\Sb^1\times M$, $s\in\Nb$, 
$L$ as in \eqref{inhomog}, 
and $P\in\Psi^{r,\rho}_\cls(X)$ be elliptic, positive and self-adjoint, with order components $r,\rho>0$. Moreover,
let $\gamma_0=\frac{d}{\min\{r,\rho\}}$ and fix $\beta>\max(s(\gamma_0-1)+1,0)$.
Then: 
    \begin{enumerate}
        \item For almost every $\alpha \in \Rb \backslash (-1,1)$, there exists $C_{\alpha}>0$ such that 
        \begin{equation}
            \nm{u-\Pi_0 u}_{L^2_{t,x}(\M)} \leq C_{\alpha} \nm{L u}_{H^{\beta}_t L^2_x(\M)},
        \end{equation}
        where $\Pi_{0}$ is the projection onto $\ker P \cap \ker D_t$.
        \item For all $\ep>0$, there exists a set $\mathcal{B}_{\ep}$ with $\mu(\mathcal{B}_{\ep})<\ep$ and $C_{\ep}>0$ such that for all $\alpha \in ([-2,-1] \cup [1,2] )\backslash \mathcal{B}_{\ep}$ we have the uniform upper bound
        \begin{equation}
            \nm{u- \Pi_{0} u}_{L^2(\M)} \leq C_{\ep} \nm{L u}_{H^{\beta}_t L^2_x(\M)},
        \end{equation}
        where $\Pi_{0}$ is the projection onto $\ker P \cap \ker D_t$.
    \end{enumerate} 
\end{theorem}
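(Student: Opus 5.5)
The plan is to rerun the argument of Section \ref{s:changeAlpha} verbatim. The only input from the spectral theory of $P$ used there is a bound $N_P(z)=O(z^{\gamma})$, together with discreteness of the spectrum and $\lambda_k\to+\infty$. For $P\in\Psi^{r,\rho}_\cls(X)$ elliptic, positive and self-adjoint with $r,\rho>0$, the resolvent is compact on $L^2(M)$. So $P$ has an orthonormal basis of eigenfunctions $\phi_k$ with eigenvalues $\lambda_k\to+\infty$, and the expansion leading to \eqref{ab} holds for tempered $u$. By \eqref{eq:weylsg} we have $N_P(z)=O(z^{\gamma_0})$ when $r\neq\rho$, and $N_P(z)=O(z^{\gamma_0}\log z)=O(z^{\gamma_0+\delta})$ for every $\delta>0$ when $r=\rho$.

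First I fix $\beta>\max(s(\gamma_0-1)+1,0)$ and choose $\delta>0$ so small that $\beta>s(\gamma_0+\delta-1)+1$. This is possible because the inequality is strict. I then set $\gamma=\gamma_0+\delta$, or $\gamma=\gamma_0$ if $r\ne\rho$. Next I define $\mathcal{Q}$ and $B_{j,n}$ exactly as before. The condition $|j^s+z\lambda_k|<|j|^{-\beta}$ with $|z|\in[n,n+1]$ forces $\lambda_k\in[(|j|^s-1)/(n+1),(|j|^s+1)/n]$. Hence there are at most $N_P((|j|^s+1)/n)=O(|j|^{s\gamma})$ such eigenvalues. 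Each contributes an interval of length about $|j|^{-\beta-s}$, so $\mu(B_{j,n})=O(|j|^{s(\gamma-1)-\beta})$, which is summable in $j$ by the choice of $\delta$.

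For part (1), Borel--Cantelli in each strip $[-n-1,-n]\cup[n,n+1]$, followed by removal of the countable set $\mathcal{Q}$ and a countable union over $n$, gives a full-measure set of $\alpha$. For each such $\alpha$ we get $|j^s+\alpha\lambda_k|\ge\ang{j}^{-\beta}$ for $|j|\ge N_\alpha$. The remaining cases are handled as before. For $|j|<N_\alpha$ and $k$ large, $|j^s+\alpha\lambda_k|\ge1$ since $\lambda_k\to\infty$. For the finitely many other $(j,k)$ with $(j,\lambda_k)\ne(0,0)$, the quantity is positive because $\alpha\notin\mathcal{Q}$. In the case $\lambda_k=0$, $j\neq0$, we have $|j^s|\ge1$. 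This gives $|a_{j,k}|\le c_{\min}^{-1}\ang{j}^{\beta}|b_{j,k}|$ off the $(0,0)$ modes, and Parseval in $(t,x)$ gives the estimate. Part (2) is again the same construction: take the tail union $\bigcup_{|j|\ge J}B_{j,1}$ of measure below $\ep/2$, and add small balls of total measure below $\ep/2$ around the finitely many points $-l^s/\lambda_k$ with $|l|\le J$ and $\lambda_k\le 2J^s$. These give a lower bound $c_{\min}\ang{j}^{-\beta}$ that is uniform in $\alpha$.

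The main point to check is that the only place the spectral asymptotics enter is the count of eigenvalues in $[(|j|^s-1)/(n+1),(|j|^s+1)/n]$. There the logarithmic loss in the case $r=\rho$ is absorbed by the strict inequality on $\beta$. A secondary point is functional-analytic: in the noncompact setting, the tempered-distribution framework must still allow the eigenfunction expansion of $u$. This follows from ellipticity in the SG calculus, since the $\phi_k$ are Schwartz-type on $X$ and the $L^2$ coefficients are well defined.
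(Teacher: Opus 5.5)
Your proposal is correct and takes essentially the same approach as the paper: the paper also reads off $N_P(z)=O(z^{\gamma_0})$ for $r\neq\rho$ and $N_P(z)=O(z^{\gamma_0+\delta})$ for $r=\rho$ from \eqref{eq:weylsg}, absorbs $\delta$ using the strict inequality on $\beta$, and then reruns the argument of Section~\ref{s:changeAlpha} unchanged. Your added remarks on the compact resolvent and the eigenfunction expansion for tempered $u$ spell out functional-analytic details that the paper leaves implicit.
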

\addtocounter{theorem}{-1}
}
As an example where Theorems \ref{thm1}' and \ref{thm2}' apply with $s=2$, consider, on $\Rb^d$, the operator
$P=(1+|x|^2)^\frac{r}{4}(1-\Delta)(1+|x|^2)^\frac{r}{4}$, $r>0$, so that $P\in\Psi^{r,2}_\cls$ is elliptic, positive
and self-adjoint with positive order components. $L$ is then again an operator of Klein-Gordon or wave type, namely
\[
	L = -\partial_t^2+\alpha (1+|x|^2)^\frac{r}{2}(1-\Delta) + \mathrm{l.o.t.}
\]
globally defined on $\Sb^1\times\Rb^d$.
For $r=2$, we find $\gamma_0=\frac{d}{2}$, $\varepsilon=\frac{1}{2}$, and $\varepsilon'=1$. Then,
also in this case, we have to choose $\beta>d$ in Theorem \ref{thm1}' and $\beta>d-1$ in Theorem \ref{thm2}'.

Finally, we note that the hypoellipticity obtained in this noncompact case is a strong version adapted to the noncompact setting: since $P$ has positive spatial order, $(D_t^2+P)^k u \in L^2$ for all $k \in \mathbb{N}$ is equivalent to $u \in \dot{\mathcal{C}}^\infty(\M)$, where this denotes the space of all smooth functions on the compact manifold $\Sb^1 \times X$ (recall that $M=X^\circ$) vanishing to infinite order at the boundary.  In the case when $M$ is the radial compactification of Euclidean space, these are just smooth (in $t$) families of Schwartz functions in $x$.  The hypoellipticity we obtain from the two results above by commuting $(D_t^2+P)^k$ through our estimate is thus $u \in (\dot{C}^{\infty})'(M)$, $(L-m)u \in \dot{\mathcal{C}}^\infty(\M) $, resp.\ $Lu \in \dot{\mathcal{C}}^\infty(\M) $,  implies $u \in \dot{\mathcal{C}}^\infty(\M)$. 
\bibliographystyle{plain}
\bibliography{hypoelliptic}
\end{document}